  \newtheorem{theoreme}{Th\'eor\`eme}
  \newtheorem*{theoreme*}{Th\'eor\`eme}
  \newtheorem{lemme}[theoreme]{Lemme}
  \newtheorem{corollaire}[theoreme]{Corollaire}
  \newtheorem{proposition}[theoreme]{Proposition}
\newtheorem*{corollaire*}{Corollaire}
\newtheorem*{proposition*}{Proposition}
\theoremstyle{remark}
  \newtheorem*{remarque*}{Remarque}
\newcounter{ex}
\newenvironment{rem*}{
  \noindent\textbf{Remarque. }}{}
\newcommand{\Cc}{\mathbb{C}}
\newcommand{\Nn}{\mathbb{N}}
\newcommand{\Pp}{\mathbb{P}}
\title{{\bf Sur l'étude de l'entropie des applications méromorphes}}
\author{Henry de Thélin}
\date{}
\begin{document}
\maketitle


\def\figurename{{Fig.}}%
\def\proofname{Preuve}
\def\contentsname{Sommaire}%

\begin{abstract}

Nous construisons un espace adapté à l'étude de l'entropie des applications méromorphes en utilisant des limites projectives. Nous en déduisons un principe variationnel pour ces applications.

\end{abstract}

\selectlanguage{english}
\begin{center}
{\bf{ }}
\end{center}

\begin{abstract}

We construct a space which is useful in order to study the entropy of meromorphic maps by using projective limits. We deduce a variational principle for meromorphic maps.

\end{abstract}

\selectlanguage{francais}

Mots-clefs: dynamique complexe, entropie.

Classification: 32H50, 32Qxx.

\section*{{\bf Introduction}}
\par

Soit $X$ une variété complexe compacte de dimension $k$ et $f: X \longrightarrow X$ une application méromorphe dominante. C'est la donnée d'un sous-ensemble analytique $\Gamma(f)$ irréductible de dimension $k$ dans $X \times X$ (le graphe de $f$) avec ${p_1}_{| \Gamma(f)}: \Gamma(f) \longrightarrow X$ la restriction à $\Gamma(f)$ de la projection sur la première coordonnée holomorphe, surjective et dont les fibres génériques sont réduites à un point, et ${p_2}_{| \Gamma(f)}: \Gamma(f) \longrightarrow X$ la restriction à $\Gamma(f)$ de la projection sur la deuxième coordonnée holomorphe et surjective.

L'application $f$ est holomorphe en dehors d'un sous-ensemble analytique $I$ de $X$ qui est l'ensemble des points $x$ avec $\{p_1^{-1}(x) \} \cap \Gamma(f)$ de dimension supérieure ou égale à $1$. L'ensemble $I$ est de codimension au moins 2.

Un enjeu majeur en dynamique méromorphe est de calculer l'entropie topologique $h_{top}(f)$ de $f$. En effet cette question est liée à l'existence de mesures hyperboliques pour $f$ (voir \cite{Det}).

Pour certaines variétés complexes compactes, on a l'existence d'outils qui permettent d'étudier cette entropie: ce sont les degrés dynamiques. Voici leurs constructions: pour $(X, \omega)$ variété kählérienne compacte et $l=0, \cdots, k$, si on note $[\Gamma(f)]$ le courant d'intégration sur $\Gamma(f)$, la forme $f^* (\omega^l)=(p_1)_{*}(p_2^* (\omega^l) \wedge [\Gamma(f)])$ est à coefficients $L^1$. On peut donc considérer 

$$\delta_l(f)= \int f^*(\omega^l) \wedge \omega^{k-l},$$

et le $l$-ème degré dynamique est défini par $d_l= \lim_{n \rightarrow + \infty} (\delta_l(f^n))^{1/n}$. L'existence de cette limite a été obtenue par A. Russakovskii et B. Shiffman dans le cas où $X=\Pp^k(\Cc)$ (voir \cite{RS}) et par T.-C. Dinh et N. Sibony quand $(X, \omega)$ est une variété kählérienne compacte (voir \cite{DS1} et \cite{DS2}).

Dans ce contexte, M. Gromov (voir \cite{Gr}) pour le cas où $f$ est holomorphe et T.-C. Dinh et N. Sibony (voir \cite{DS1} et \cite{DS2}) pour le cas méromorphe ont montré que l'entropie topologique de $f$ est toujours majorée par $\max_{l=0, \cdots, k} \log d_l$. 

Il y a de nombreux travaux qui portent sur la minoration de cette entropie topologique ainsi que sur la construction de mesures avec une entropie métrique maximale. La situation est beaucoup plus délicate que pour les applications lisses: par exemple, il existe des applications méromorphes de $\Pp^2(\Cc)$ avec $d_1=d_2=2$ (le degré topologique vaut donc $2$) qui sont d'entropie nulle (voir \cite{Gu}). Dans ce type d'exemple la dynamique se concentre en particulier sur l'ensemble d'indétermination. Dans \cite{Gu}, V. Guedj conjecture que lorsqu'il y a un degré dynamique qui domine strictement tous les autres alors $h_{top}(f)=\max_{l=0, \cdots, k} \log d_l$.

L'idée dans cet article va être de considérer des éclatements de $X$ dans le lieu d'indétermination de $f$ et de relever $f$ en une application méromorphe dans cet espace. Ensuite nous recommencerons avec la nouvelle application et nous produirons donc une suite d'espaces $X_n$ et des applications méromorphes $F_n: X_n \longrightarrow X_n$ qui relèvent $f$. Enfin, il s'agira de prendre une limite projective sur les $(X_n)$. Nous obtiendrons ainsi un espace particulièrement bien adapté à l'étude de la dynamique de $f$. Signalons qu'une telle construction a été réalisée dans \cite{HP} pour un cas particulier et notons aussi le lien avec \cite{SFJ} et \cite{Ca} où les auteurs considèrent l'espace constitué de tous les éclatements de la variété.

Nous détaillerons le procédé d'éclatements dans le paragraphe \ref{éclatements}. Nous aboutirons à une situation qui peut être formalisée de la façon plus générale suivante.

Notons $X_0=X$ et $F_0=f$ et admettons que l'on ait une suite $(X_n)$ de variétés complexes compactes de dimension $k$ et $F_n: X_n \longrightarrow X_n$ une suite d'applications méromorphes dominantes telles que pour tout $n \geq 1$ on ait le diagramme commutatif suivant:

$$
\xymatrix{
X_n  \ar@{.>}[r]^{F_n} \ar[d]_{\pi_n} \ar[rd]^{s_n} & X_n \ar[d]^{\pi_n}\\
X_{n-1} \ar@{.>}[r]^{F_{n-1}} & X_{n-1}
}
$$

où $\pi_n : X_n \longrightarrow X_{n-1}$ est holomorphe, surjective et avec ses fibres génériques réduites à un point et $s_n : X_n \longrightarrow X_{n-1}$ holomorphe et surjective. Nous supposerons aussi que
$$ \{ x \in X_{n-1} \mbox{  ,  } \mbox{dim}({\pi_n^{-1}(x)}) \geq 1 \} \subset I(F_{n-1}) $$
où $I(F_{n-1})$ est l'ensemble d'indétermination de $F_{n-1}$. 

On considère alors la limite projective $X_{\infty} = \varprojlim X_n$ qui est simplement ici
$$X_{\infty} = \{ (x_n) \in \prod_{n \geq 0} X_n \mbox{  ,  } \pi_n(x_n)=x_{n-1} \mbox{  pour tout  } n \geq 1 \}.$$

Nous munissons $X_n$ d'une métrique $dist_n$ compatible avec sa topologie (pour tout $n \geq 0$) et sur $X_{\infty}$ on définit la métrique

$$\delta(\widehat{x}, \widehat{y})= \sum_{n=0}^{+ \infty} \frac{dist_n(x_n,y_n)}{\alpha_n diam(X_n)}$$

avec $\widehat{x}=(x_n)$ et $\widehat{y}=(y_n)$ dans $X_{\infty}$, $diam(X_n)$ le diamètre de $X_n$ pour $dist_n$ et $(\alpha_n)$ une suite de réels strictement positifs avec $\sum_{n \geq 0} \frac{1}{\alpha_n} < + \infty$. Cette métrique rend $X_{\infty}$ compact.

Nous allons maintenant définir un opérateur $\sigma: X_{\infty} \longrightarrow X_{\infty}$ qui va relever $f$. 

Pour $\widehat{x}=(x_n) \in X_{\infty}$, on pose

$$\sigma(\widehat{x})=(s_n(x_n))_{n \geq 1} \in X_{\infty}.$$

L'opérateur $\sigma$ résout l'indétermination de $f$. En effet, on a

\begin{proposition}{\label{prop1}}
L'application $\sigma$ est continue.
\end{proposition}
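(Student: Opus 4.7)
The plan is to reduce continuity of $\sigma$ to coordinate-wise continuity, exploiting the fact that the metric $\delta$ induces on $X_{\infty}$ the topology inherited from the product $\prod_{n \geq 0} X_n$.

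First I would verify that, for each fixed $n \geq 0$, the coordinate projection $p_n : X_{\infty} \to X_n$, $\widehat{x} \mapsto x_n$, is Lipschitz. Indeed, every summand defining $\delta$ is nonnegative, so
$$ dist_n(x_n,y_n) \;\leq\; \alpha_n \, diam(X_n) \, \delta(\widehat{x},\widehat{y}). $$
Conversely, if $\widehat{x}^{(k)} \to \widehat{x}$ coordinate by coordinate, one recovers $\delta(\widehat{x}^{(k)},\widehat{x}) \to 0$ by a standard $\varepsilon/2$ argument: given $\varepsilon > 0$, choose $N$ with $\sum_{n>N} 1/\alpha_n < \varepsilon/2$; the tail of $\delta$ beyond index $N$ is bounded termwise by $1/\alpha_n$ and so contributes at most $\varepsilon/2$, while the finite head $n = 0, \dots, N$ tends to zero by coordinate-wise convergence and is eventually less than $\varepsilon/2$. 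Thus convergence in $\delta$ is exactly coordinate-wise convergence.

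Granted this equivalence, continuity of $\sigma$ is immediate: if $\widehat{x}^{(k)} \to \widehat{x}$ in $(X_{\infty},\delta)$, then for each $m \geq 0$ one has $x_{m+1}^{(k)} \to x_{m+1}$ in $X_{m+1}$; since $s_{m+1} : X_{m+1} \to X_m$ is holomorphic, hence continuous, $s_{m+1}(x_{m+1}^{(k)}) \to s_{m+1}(x_{m+1})$, which is precisely coordinate-wise convergence of $\sigma(\widehat{x}^{(k)})$ to $\sigma(\widehat{x})$, and therefore $\delta$-convergence by the first step.

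I do not expect any substantive obstacle: the proposition is essentially a repackaging of the standard fact that a weighted-sum metric on a countable product of bounded metric spaces induces the product topology, together with the (global) holomorphy, built into the construction, of each lifted map $s_n$. The indeterminacy of $f$ disappears from the picture precisely because the $s_n$, unlike the meromorphic $F_n$, are defined and holomorphic on all of $X_n$; this is exactly what makes $\sigma$ ``resolve'' the indeterminacy.
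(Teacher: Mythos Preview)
Your proof is correct and follows essentially the same approach as the paper: both arguments use the tail--head $\varepsilon/2$ split on the series defining $\delta$, together with the continuity of each $s_{n+1}$, to pass from coordinate-wise convergence of $\widehat{x}^{(k)}$ to coordinate-wise (hence $\delta$-) convergence of $\sigma(\widehat{x}^{(k)})$. The only difference is organizational---you first isolate the equivalence ``$\delta$-convergence $=$ product-topology convergence'' as a lemma and then apply it, whereas the paper performs the same estimate directly on $\delta(\sigma(\widehat{x_m}),\sigma(\widehat{x}))$.
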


Nous avons plusieurs systèmes dynamiques: les $F_n: X_n \longrightarrow X_n$ (dont le système initial $f: X \longrightarrow X$) et $\sigma: X_{\infty} \longrightarrow X_{\infty}$. Le premier objectif de cet article est de donner le lien entre ces quantités:

\begin{theoreme}{\label{théorème1}}
La suite $(h_{top}(F_n))_n$ est croissante et

$$h_{top}(\sigma)= \sup_{n \geq 0} h_{top}(F_n).$$

\end{theoreme}

Pour l'exemple de V. Guedj énoncé plus haut, nous verrons que $h_{top}(\sigma)= \log 2 = \max \log d_l$. Cela renforce l'idée que $\sigma$ résout l'indétermination de $f$.

Remarquons que l'espace $X_{\infty}$ et l'application $\sigma$ dépendent de la suite $(X_n)$ considérée. Nous donnerons au paragraphe \ref{éclatements} un exemple de construction mais il peut très bien y en avoir d'autres. Par ailleurs, nous verrons que les quantités ci-dessus ne dépendent pas du choix des distances $dist_n$ ou de la suite $(\alpha_n)$.

Le second objectif de cet article est de donner une application de la construction de l'espace $X_{\infty}$.

Pour $A \subset X$, on pose $f^{-1}(A)=p_1(p_2^{-1}(A) \cap \Gamma(f))$ et

$$\widetilde{f^{-n}(A)}= f^{-1} ( \cdots ( f^{-1}(A)) \cdots )$$

où le $f^{-1}$ apparaît $n$ fois (pour $n \in \Nn$). Remarquons que $\widetilde{f^{-n}(A)}$ peut-être différent de $f^{-n}(A)$, en particulier quand $f$ n'est pas algébriquement stable.

Le deuxième théorème de cet article est le principe variationnel suivant:

\begin{theoreme}{\label{théorème2}}
 
On suppose tous les $\widetilde{f^{-n}(I)}$ disjoints pour $n \in \Nn$. Alors

$$h_{top}(\sigma)= h_{top}(f)= \sup \{h_{\nu}(f) \mbox{  ,  } \nu \mbox{ ergodique et } \nu (I)=0 \}.$$

\end{theoreme}

Remarquons que le principe variationnel n'est pas vrai en toute généralité (voir l'exemple 3.1 de \cite{Gu}), il est donc nécessaire de mettre une hypothèse sur l'ensemble d'indétermination.

Notons aussi que ce théorème est plus fort que le principe variationnel classique, qui est

$$h_{top}(f)= \sup \{h_{\nu}(f) \mbox{  ,  } \nu \mbox{ ergodique et } \nu(I)=0 \}.$$

Voici le plan de cet article: dans un premier paragraphe, nous donnons une construction possible des espaces $X_n$ et des diagrammes précédents. Dans le second, nous prouvons la proposition \ref{prop1} et le théorème \ref{théorème1}. Ensuite nous détaillerons le calcul de $h_{top}(\sigma)$ sur un exemple: celui de V. Guedj où $d_1=d_2=2$ et $h_{top}(f)=0$. Nous verrons que dans ce cas $h_{top}(\sigma)= \log 2$. Enfin, dans le dernier paragraphe, nous démontrerons le principe variationnel.

{\bf Remerciement:} je remercie Tien-Cuong Dinh pour les discussions que nous avons eues au sujet de cet article.

\section{\bf Construction des suites d'éclatements}{\label{éclatements}}

Dans ce paragraphe nous produisons une suite de variétés complexes compactes $(X_n)$ et d'applications méromorphes $F_n: X_n \longrightarrow X_n$ qui vérifient le formalisme général donné dans l'introduction.

On part de la variété complexe compacte $X_0=X$ de dimension $k$, munie d'une distance $dist_0$ et de $F_0=f$ méromorphe dominante de $X$ dans $X$.

Les singularités de $\Gamma(f)$ sont dans $p_1^{-1}(I) \cap \Gamma(f)$. D'après le théorème de désingularisation d'Hironaka (voir \cite{Hi}), il existe une variété complexe compacte $\widetilde{X \times X}$, une sous-variété $\widetilde{\Gamma(f)}$ de $\widetilde{X \times X}$ et une application holomorphe $\pi: \widetilde{X \times X} \longrightarrow X \times X$ (qui est une composée d'éclatements) telles que $\pi$ soit un biholomorphisme de $ \widetilde{\Gamma(f)} \setminus (\pi^{-1}( p_1^{-1}(I) \cap \Gamma(f)))$ dans $\Gamma(f) \setminus (p_1^{-1}(I) \cap \Gamma(f))$. On a $\pi(\widetilde{\Gamma(f)})= \Gamma(f)$.

Remarquons que lorsque $X$ est kählérienne, $X \times X$ l'est aussi. En particulier, sous cette hypothèse, $\widetilde{X \times X} $ est une variété kählérienne par un théorème de Blanchard (voir \cite{Bl}) et $\widetilde{\Gamma(f)}$ aussi en tant que sous-variété de $\widetilde{X \times X} $.

Notons $X_1= \widetilde{\Gamma(f)}$, $\pi_1=p_1 \circ \pi$ et $s_1= p_2 \circ \pi$. On a $\pi_1$ holomorphe, surjective avec ses fibres génériques réduites à un point et $s_1$ holomorphe et surjective. Ainsi on obtient le diagramme

$$
\xymatrix{
X_1   \ar[d]_{\pi_1} \ar[rd]^{s_1} \\
X_{0} \ar@{.>}[r]^{F_{0}} & X_{0}
}
$$

L'application $\pi_1$ est biméromorphe. On peut donc considérer $F_1= \pi_1^{-1} \circ F_0 \circ \pi_1$ qui est bien définie en dehors d'un sous-ensemble analytique de $X_1$. En prenant l'adhérence du graphe de $F_1$ dans $X_1 \times X_1$ on obtient ainsi une nouvelle application méromorphe $F_1: X_1 \longrightarrow X_1$. Comme $F_0$ est dominante, $F_1$ l'est aussi. On a bien obtenu le diagramme voulu dans l'introduction avec $n=1$. Remarquons aussi que par construction

$$ \{ x \in X_{0} \mbox{  ,  } \mbox{dim}({\pi_1^{-1}(x)}) \geq 1 \} \subset I(F_{0})=I.$$

On peut maintenant recommencer tout ce que l'on vient de faire avec $X_1$ à la place de $X_0$ et $F_1$ au lieu de $F_0$. En itérant le procédé, on obtient ainsi une suite de variétés complexes compactes $(X_n)$ de dimension $k$ et des applications méromorphes $F_n: X_n \longrightarrow X_n$ qui vérifient les conditions demandées dans l'introduction.

Remarquons que lorsque $X=X_0$ est kählérienne, tous les $X_n$ le sont aussi. Comme T.-C. Dinh et N. Sibony ont montré que les degrés dynamiques sont des invariants biméromorphes (voir \cite{DS1} p. 961), on en déduit par récurrence que

\begin{lemme}

Lorsque $X$ est kählérienne, on a

$$d_l(F_n)=d_l(F_0)=d_l(f)$$

pour tout $l=0, \cdots , k$ et $n \geq 0$.

\end{lemme}

\section{Démonstration de la proposition \ref{prop1} et du théorème \ref{théorème1}}

Dans ce paragraphe, nous allons tout d'abord montrer que les résultats ne dépendent pas des distances $dist_n$ sur $X_n$ et de la suite $(\alpha_n)$ choisies. Ensuite, nous donnerons une construction de distances qui permettront de simplifier les preuves. Nous montrerons alors la continuité de $\sigma$ (proposition \ref{prop1}), puis que la suite $(h_{top}(F_n))_n$ est croissante. Enfin, nous prouverons l'égalité

$$h_{top}(\sigma)= \sup_{n \geq 0} h_{top}(F_n).$$

\subsection{\bf Indépendance des résultats aux métriques}{\label{indépendance}}

Tout d'abord, comme $X_n$ est compacte, l'entropie topologique $h_{top}(F_n)$ ne dépend pas de la métrique $dist_n$ (qui définit sa topologie) que l'on a choisie (voir la proposition 3.1.2 de \cite{KH}). Cela provient du fait que deux métriques topologiquement équivalentes sont uniformément équivalentes.

Considérons maintenant deux métriques sur $X_{\infty}$ construites comme dans l'introduction:

$$\delta(\widehat{x}, \widehat{y})= \sum_{n=0}^{+ \infty} \frac{dist_n(x_n,y_n)}{\alpha_n diam(X_n)} \mbox{   et   }  \delta'(\widehat{x}, \widehat{y})= \sum_{n=0}^{+ \infty} \frac{dist_n'(x_n,y_n)}{\beta_n diam'(X_n)} $$

avec $\widehat{x}=(x_n)$ et $\widehat{y}=(y_n)$ dans $X_{\infty}$, $dist_n$ et $dist_n'$ des métriques sur $X_n$ qui définissent sa topologie, $diam(X_n)$ le diamètre de $X_n$ pour $dist_n$, $diam'(X_n)$ celui pour $dist'_n$, $(\alpha_n)$ et $(\beta_n)$ des suites de réels strictement positifs avec $\alpha= \sum_{n \geq 0} \frac{1}{\alpha_n} < + \infty$ et $\beta= \sum_{n \geq 0} \frac{1}{\beta_n} < + \infty$.

Montrons qu'elles sont topologiquement équivalentes. Comme $X_{\infty}$ est compact, on aura comme précédemment que l'entropie topologique de $\sigma$ pour $\delta$ sera la même que l'entropie topologique de $\sigma$ pour $\delta'$.

Soit $\epsilon >0$. On choisit $n_0 \in \Nn$ tel que $\sum_{n \geq n_0} \frac{1}{\alpha_n} < \frac{\epsilon}{2}$. 

Fixons $n$ compris entre $0$ et $n_0 -1$. L'application $Id:(X_n, dist_n') \longrightarrow (X_n, dist_n)$ est uniformément continue. Il existe donc $\eta_n > 0$ tel que pour tout $x,y \in X_n$ avec $dist_n'(x,y) < \eta_n$ on ait $dist_n(x,y) < \frac{\epsilon}{2\alpha} diam(X_n)$.

Soit

$$\eta= \min_{n=0, \cdots, n_0 -1} \left( \frac{\eta_n}{\beta_n diam'(X_n)} \right).$$

Pour $\widehat{x}=(x_n)$ et $\widehat{y}=(y_n)$ dans $X_{\infty}$ avec 

$$\delta'(\widehat{x}, \widehat{y})=\sum_{n=0}^{+ \infty} \frac{dist_n'(x_n ,y_n)}{\beta_n diam'(X_n)} < \eta$$

on a $dist'(x_n,y_n)< \eta_n$ pour $n=0, \cdots, n_0 -1$. D'où

\begin{equation*}
\begin{split}
\delta(\widehat{x},\widehat{y}) &= \sum_{n=0}^{+ \infty} \frac{dist_n(x_n ,y_n)}{\alpha_n diam(X_n)} \leq \sum_{n=0}^{n_0 -1} \frac{ dist_n(x_n ,y_n)}{\alpha_n diam(X_n)} + \frac{\epsilon}{2}\\
& \leq \sum_{n=0}^{n_0 -1} \frac{ \epsilon diam(X_n)}{2 \alpha \alpha_n diam(X_n)} + \frac{\epsilon}{2} \leq \epsilon.
\end{split}
\end{equation*}

L'application $Id: (X_{\infty} , \delta') \longrightarrow (X_{\infty} , \delta)$ est donc uniformément continue. Par le même raisonnement, on montre que $Id: (X_{\infty} , \delta) \longrightarrow (X_{\infty} , \delta')$ l'est aussi, d'où l'équivalence des métriques $\delta$ et $\delta'$.

L'entropie topologique $h_{top}(\sigma)$ et la continuité de $\sigma$ sont donc indépendantes du choix des métriques $dist_n$ sur $X_n$ et de la suite $(\alpha_n)$.

$$ $$

Nous allons maintenant construire une suite de distances $dist_n$ sur $X_n$ qui aura une propriété de croissance qui permettra de simplifier les démonstrations.

Pour cela, on part de métriques $dist_n'$ (pour $n \in \Nn$) qui définissent la topologie de $X_n$. En particulier, les applications $\pi_n: X_n \longrightarrow X_{n-1}$ sont holomorphes donc continues. Notons $dist_0=dist_0'$ et $dist_1(x,y)=dist_1'(x,y)+ dist_0(\pi_1(x),\pi_1(y))$ (pour $x,y \in X_1$). On a $dist_1$ et $dist_1'$ topologiquement équivalentes car $\pi_1$ est continue. En recommençant le procédé, on obtient une suite de distances $dist_n$ qui vérifient que pour tout $n \geq 1$ et tout $x,y \in X_n$, on a $dist_n(x,y) \geq dist_{n-1}(\pi_1(x),\pi_1(y))$.

Comme on a vu que les résultats ne dépendent pas du choix des distances $dist_n$ sur $X_n$ ainsi que de la suite $(\alpha_n)$, dans toute la suite de l'article, nous prendrons la suite de distances $dist_n$ que nous venons de construire et $(\alpha_n)=(2^n)$.

Ainsi

$$\delta(\widehat{x}, \widehat{y})= \sum_{n=0}^{+ \infty} \frac{dist_n(x_n,y_n)}{2^n diam(X_n)}.$$

\subsection{\bf Continuité de $\sigma$}

Fixons $\widehat{x}=(x_n) \in X_{\infty}$ et $\epsilon > 0$. Soit $(\widehat{x_m})$ une suite de $ X_{\infty}$ qui converge vers $\widehat{x}$.

On écrit $\widehat{x_m}= (x_{m,n})$ avec $x_{m,n} \in X_n$ pour tout $n \geq 0$. En particulier, par définition de $\sigma$, on a $\sigma(\widehat{x_m})=(s_{n}(x_{m,n}))_{n \geq 1}$ et  $\sigma(\widehat{x})=(s_{n}(x_{n}))_{n \geq 1}$.

Pour $n_0$ assez grand on a $\sum_{n \geq n_0} \frac{1}{2^n} < \frac{\epsilon}{2}$ et alors

\begin{equation*}
\begin{split}
\delta(\sigma(\widehat{x_m}), \sigma(\widehat{x}))&= \sum_{n=0}^{n_0 - 1} \frac{dist_n(s_{n+1}(x_{m,n+1}),s_{n+1}(x_{n+1}))}{2^n diam(X_n)}\\
& + \sum_{n=n_0}^{+ \infty} \frac{dist_n(s_{n+1}(x_{m,n+1}),s_{n+1}(x_{n+1}))}{2^n diam(X_n)}\\
& \leq \sum_{n=0}^{n_0 - 1} \frac{dist_n(s_{n+1}(x_{m,n+1}),s_{n+1}(x_{n+1}))}{2^n diam(X_n)} + \frac{\epsilon}{2}.\\
\end{split}
\end{equation*}

Comme on a $(\widehat{x_m})$ qui converge vers $\widehat{x}$ quand $m \rightarrow + \infty$, la suite $(x_{m,n})_m$ converge vers $x_n$ quand $m \rightarrow + \infty$ pour $dist_n$ (pour tout $n \geq 0$). Les applications $s_n$ sont holomorphes donc $(s_n(x_{m,n}))_m$ converge vers $s_n(x_n)$ quand $m$ tend vers l'infini pour la métrique $dist_{n-1}$ pour tout $n \geq 1$.

En particulier, $\sum_{n=0}^{n_0 - 1} \frac{dist_n(s_{n+1}(x_{m,n+1}),s_{n+1}(x_{n+1}))}{2^n diam(X_n)}$ converge vers $0$ quand $m \rightarrow + \infty$ car il n'y a qu'un nombre fini de termes dans la somme: cette quantité est donc plus petite que $\frac{\epsilon}{2}$ pour $m$ assez grand et la proposition \ref{prop1} est démontrée.

\subsection{\bf Croissance  de $(h_{top}(F_n))_n$}{\label{croissance}}

Rappelons tout d'abord la définition de $(h_{top}(F_n))_n$ (voir par exemple \cite{Gu}).

Notons pour cela 

$$\Omega_n=X_n \setminus \cup_{m \in \Nn} F_n^{-m}(I(F_n))$$

où $I(F_n)$ est l'ensemble d'indétermination de $F_n$. L'ensemble $\Omega_n$ est dense dans $X_n$ car $F_n$ est dominante et il est invariant par $F_n$. Alors, on a

$$h_{top}(F_n)= \lim_{\epsilon \rightarrow 0} \limsup_{m \rightarrow + \infty} \frac{1}{m} \log \max ( \# G \mbox{  ,  } G  \mbox{  ensemble  } (m, \epsilon) \mbox{-séparé dans  } \Omega_n \mbox{  pour  } F_n ).$$

Montrons que la suite $(h_{top}(F_n))_n$ est croissante. On fixe $n \geq 1$.

Soit $\gamma > 0$. Pour $\epsilon$ assez petit, on a 

$$ \limsup_{m \rightarrow + \infty} \frac{1}{m} \log \max ( \# G \mbox{, } G  \mbox{ ens. } (m, \epsilon) \mbox{-séparé dans } \Omega_{n - 1} \mbox{ pour } F_{n-1} ) \geq h_{top}(F_{n-1}) - \gamma.$$

Soit $m_0 \in \Nn$. On peut trouver $m \geq m_0$ et $\{x_1, \cdots, x_N \}$ un ensemble $(m, \epsilon)$-séparé dans $\Omega_{n-1}$ pour $F_{n-1}$ avec $N \geq e^{(h_{top}(F_{n-1}) - 2 \gamma)m}$.

Si on fixe une forme volume sur $X_{n-1}$, on voit que l'ensemble $\pi_n(\cup_{m \in \Nn} F_n^{-m}(I(F_n)))$ est de volume nul car $F_n$ est dominante et $\pi_n$ holomorphe. Il en est de même du complémentaire de $\Omega_{n-1}$. En particulier, quitte à bouger un peu les $x_i$, on peut produire des points $x_1' , \cdots , x_N'$ qui sont $(m, \frac{\epsilon}{2})$-séparés pour $F_{n-1}$ et dans $\Omega_{n-1} \setminus \pi_n(\cup_{m \in \Nn} F_n^{-m}(I(F_n)))$. En effet, comme les points $x_i$ sont dans $\Omega_{n-1}$, les itérées de $F_{n-1}$ sont continues en ces points là.

L'application $\pi_n$ est surjective, on peut donc trouver $y_1 , \cdots , y_N$ dans $X_n$ avec $\pi_n(y_i)=x_i'$ pour $i=1 , \cdots , N$. Par construction les $y_i$ sont dans $\Omega_n$.

Montrons maintenant que les points $y_i$ sont $(m, \frac{\epsilon}{2})$-séparés pour $F_{n}$. 

Si $i \neq j$ avec $1 \leq i,j \leq N$, les points $x_i'$ et $x_j'$ sont $(m, \frac{\epsilon}{2})$-séparés pour $F_{n-1}$. Il existe donc $0 \leq l \leq m-1$ avec $dist_{n-1}(F_{n-1}^l(x_i'),F_{n-1}^l(x_j') ) \geq \frac{\epsilon}{2}$. Par l'hypothèse faite sur les distances, on a

$$dist_{n}(F_{n}^l(y_i),F_{n}^l(y_j) ) \geq dist_{n-1}(\pi_n(F_{n}^l(y_i)),\pi_n(F_{n}^l(y_j))).$$

Par définition, on a $\pi_n \circ F_n^l= F_{n-1}^l \circ \pi_n$ en dehors d'un sous-ensemble analytique de $X_n$. Comme $y_i$ est dans $\Omega_n$, on a $F_n^l$ continue en $y_i$ et puisque $x_i'=\pi_n(y_i)$ est dans $\Omega_{n-1}$, on a $F_{n-1}^l$ continue en $\pi_n(y_i)$. On a donc $\pi_n(F_{n}^l(y_i))= F_{n-1}^l( \pi_n(y_i))$. Il en est de même pour $y_j$. En combinant cela avec l'inégalité ci-dessus, on obtient 

$$dist_{n}(F_{n}^l(y_i),F_{n}^l(y_j) ) \geq dist_{n-1}(F_{n-1}^l(x_i'),F_{n-1}^l(x_j') ) \geq \frac{\epsilon}{2}.$$

On a donc montré que pour tout $m_0$, il existe $m \geq m_0$ et un ensemble $G$ de $\Omega_n$ qui est $(m, \frac{\epsilon}{2})$-séparé pour $F_{n}$ avec $\# G \geq e^{(h_{top}(F_{n-1}) - 2 \gamma)m}$.

En particulier,

$$ \limsup_{m \rightarrow + \infty} \frac{1}{m} \log \max ( \# G \mbox{, } G  \mbox{ ensemble } (m, \frac{\epsilon}{2}) \mbox{-séparé dans } \Omega_{n } \mbox{ pour } F_{n} ) \geq h_{top}(F_{n-1}) -2 \gamma.$$

En faisant tendre $\epsilon$ puis $\gamma$ vers $0$, on obtient $h_{top}(F_{n}) \geq h_{top}(F_{n-1})$. C'est ce que l'on voulait démontrer.

\subsection{\bf Démonstration de l'égalité $h_{top}(\sigma)= \sup_{n \geq 0} h_{top}(F_n)$}

Nous commençons par montrer que pour tout $n \geq 0$ on a $h_{top}(\sigma) \geq h_{top}(F_n)$ puis que $h_{top}(\sigma)= \sup_{n \geq 0} h_{top}(F_n)$.

\subsubsection{\bf Preuve de $h_{top}(\sigma) \geq h_{top}(F_n)$}

Soit $\gamma > 0$. Pour $\epsilon$ assez petit, on a 

$$ \limsup_{m \rightarrow + \infty} \frac{1}{m} \log \max ( \# G \mbox{, } G  \mbox{ ensemble } (m, \epsilon) \mbox{-séparé dans } \Omega_{n} \mbox{ pour } F_{n} ) \geq h_{top}(F_{n}) - \gamma.$$

Soit $m_0 \in \Nn$. On peut trouver $m \geq m_0$ et $\{x_1, \cdots, x_N \}$ un ensemble $(m, \epsilon)$-séparé dans $\Omega_{n}$ pour $F_{n}$ avec $N \geq e^{(h_{top}(F_{n}) - 2 \gamma)m}$.

Les applications $\pi_l$ sont surjectives, il existe donc $\widehat{x_i} \in X_{\infty}$ avec 

$$\widehat{x_i}= ( \cdots, x_i, \pi_n(x_i), \cdots , \pi_1( \cdots (\pi_n(x_i))))$$

pour $i=1, \cdots , N$.

Montrons que les points $\widehat{x_i}$ sont $\left( m, \frac{\epsilon}{2^n diam(X_n)} \right)$-séparés dans $X_{\infty}$ pour $\sigma$.

Soit $1 \leq i,j \leq N$ avec $i \neq j$. On a l'existence de $0 \leq l \leq m-1$ avec 

$$dist_n (F^l_n(x_i), F^l_n(x_j)) \geq \epsilon.$$

Maintenant,

$$\delta( \sigma^l(\widehat{x_i}), \sigma^l(\widehat{x_j}))= \sum_{p=0}^{+ \infty} \frac{ dist_p((\sigma^l(\widehat{x_i}))_p , (\sigma^l(\widehat{x_j}))_p)}{2^p diam(X_p)}$$

où $\sigma^l(\widehat{x_i})= ( \cdots , (\sigma^l(\widehat{x_i}))_p , \cdots , (\sigma^l(\widehat{x_i}))_0)$.

Calculons $(\sigma^l(\widehat{x_i}))_p$.

Notons $\widehat{x_i}=(\cdots , x_{i,p}, \cdots , x_{i,0})$ (avec $x_{i,n}=x_i$). On a 

$$\sigma(\widehat{x_i})=(\cdots , s_{p+1}(x_{i,p+1}), \cdots , s_1(x_{i,1}))$$

et en recommençant $l$ fois

$$\sigma^l(\widehat{x_i})=(\cdots , s_{p+1}\circ \cdots \circ s_{p+l}(x_{i,p+l}), \cdots , s_1 \circ \cdots \circ s_l(x_{i,l})).$$

Ainsi $(\sigma^l(\widehat{x_i}))_p =s_{p+1}\circ \cdots \circ s_{p+l}(x_{i,p+l}) $ pour $p \geq 0$. Mais

\begin{lemme}{\label{lemme4}}

Pour tout $l \geq 1$ et $p\geq 0$, on a $F_p^l \circ \pi_{p+1} \circ \cdots \circ \pi_{p+l}= s_{p+1} \circ \cdots \circ s_{p+l}$.

\end{lemme}

\begin{proof}

Rappelons que lorsque $h: X \longrightarrow Y$ et $g: Y \longrightarrow Z$ sont des applications méromorphes dominantes entre variétés complexes compactes, la composée $g \circ h$ est définie par son graphe dans $X \times Z$ obtenu en prenant l'adhérence dans $X \times Z$ de l'ensemble des points $\{(x, g(h(x))) \mbox{  ,  } x \notin I(h) \mbox{  ,  } h(x) \notin I(g) \}$. Par ailleurs deux applications méromorphes dominantes sont égales si elles ont même graphe, ou ce qui revient au même, si elles coïncident sur un ouvert où elles sont toutes les deux holomorphes.

Fixons $p \geq 0$ et faisons une récurrence sur $l \geq 1$.

Pour $l=1$ on a $F_p \circ \pi_{p+1}= s_{p+1}$ grâce au diagramme que l'on a supposé dans l'introduction.

Supposons la propriété vraie au rang $l$. L'ensemble

$$\mathcal{E}=\{x \in X_{p+l} \mbox{  ,  } F_{p}^q \circ \pi_{p+1} \circ \cdots \circ \pi_{p+l}(x) \notin I(F_p) \mbox{  pour  } q=0, \cdots, l\}$$

est le complémentaire d'un sous-ensemble analytique de $X_{p+l}$ car $F_p$ et les $\pi_m$ sont dominantes.

Si $x \in \pi_{p+l+1}^{-1}(\mathcal{E})$, on a

$$F_p^{l+1} \circ \pi_{p+1} \circ \cdots \circ \pi_{p+l+1}(x)= F_p \circ s_{p+1} \circ \cdots \circ s_{p+l} \circ \pi_{p+l+1}(x)$$

par hypothèse de récurrence.

Maintenant soit

$$\mathcal{F}=\{ x \in X_{p+l+1} \mbox{  ,  } s_{p+q} \circ \cdots s_{p+l} \circ \pi_{p+l+1}(x) \notin I(F_{p+q-1}) \mbox{  pour  } q=1, \cdots, l+1\}.$$

$\mathcal{F}$ est le complémentaire d'un sous-ensemble analytique de $X_{p+l+1}$ et par le diagramme, si $x \in \mathcal{F}$ on a

\begin{equation*}
\begin{split}
F_p \circ s_{p+1} \circ \cdots \circ s_{p+l} \circ \pi_{p+l+1}(x)&=s_{p+1} \circ F_{p+1} \circ s_{p+2} \circ \cdots \circ s_{p+l} \circ \pi_{p+l+1}(x) = \cdots\\
&= s_{p+1} \circ \cdots \circ s_{p+l} \circ F_{p+l} \circ \pi_{p+l+1}(x)\\
&= s_{p+1} \circ \cdots \circ s_{p+l} \circ s_{p+l+1}(x).
\end{split}
\end{equation*}

Pour $x \in  \pi_{p+l+1}^{-1}(\mathcal{E}) \cap \mathcal{F}$ on a bien 

$$F_p^{l+1} \circ \pi_{p+1} \circ \cdots \circ \pi_{p+l+1}(x)=s_{p+1} \circ \cdots \circ s_{p+l} \circ s_{p+l+1}(x)$$

ce qui démontre le lemme.

\end{proof}

Comme le point $\pi_{n+1} \circ \cdots \circ \pi_{n+l} (x_{i,n+l})= x_{i,n}=x_i \in \Omega_n$ (là où les $F_n^l$ sont holomorphes), en appliquant le lemme précédent pour $p=n$, on obtient

$$(\sigma^l(\widehat{x_i}))_n= F_n^l \circ \pi_{n+1} \circ \cdots \circ \pi_{n+l} (x_{i,n+l})=  F_n^l(x_i).$$

De même $(\sigma^l(\widehat{x_j}))_n = F_n^l(x_j)$. Finalement,

\begin{equation*}
\begin{split}
\delta( \sigma^l(\widehat{x_i}), \sigma^l(\widehat{x_j}))&= \sum_{p=0}^{+ \infty} \frac{ dist_p((\sigma^l(\widehat{x_i}))_p , (\sigma^l(\widehat{x_j}))_p)}{2^p diam(X_p)}\\
& \geq \frac{ dist_n((\sigma^l(\widehat{x_i}))_n , (\sigma^l(\widehat{x_j}))_n)}{2^n diam(X_n)}\\
& = \frac{ dist_n(F_n^l(x_i) , F_n^l(x_j))}{2^n diam(X_n)} \geq \frac{\epsilon}{2^n diam(X_n)}.
\end{split}
\end{equation*}

Les points $\widehat{x_i}$ sont donc bien $\left( m, \frac{\epsilon}{2^n diam(X_n)} \right)$-séparés.

Ainsi,

\begin{equation*}
\begin{split}
&\limsup_{m \rightarrow + \infty} \frac{1}{m} \log \max ( \# G \mbox{, } G  \mbox{ ensemble } ( m, \frac{\epsilon}{2^n diam(X_n)} ) \mbox{-séparé dans } X_{\infty} \mbox{ pour } \sigma )\\
 &\geq h_{top}(F_{n}) - 2 \gamma.
\end{split}
\end{equation*}

En faisant tendre $\epsilon$ puis $\gamma$ vers $0$, on obtient

$$h_{top}(\sigma) \geq h_{top}(F_{n})$$

pour tout $n \geq 0$.

\subsubsection{\bf Fin de la preuve de $h_{top}(\sigma)= \sup_{n \geq 0} h_{top}(F_n)$}

Soit $\gamma > 0$. Pour $\epsilon$ assez petit, on a 

$$ \limsup_{m \rightarrow + \infty} \frac{1}{m} \log \max ( \# G \mbox{, } G  \mbox{ ensemble } (m, \epsilon) \mbox{-séparé dans } X_{\infty} \mbox{ pour } \sigma ) \geq h_{top}(\sigma) - \gamma.$$

Soit $n_0 \in \Nn$ tel que $\sum_{n \geq n_0} \frac{1}{2^n} < \frac{\epsilon}{4}$. On fixe $m \geq 1$ et on considère $\widehat{x_1}, \cdots , \widehat{x_N}$ un ensemble maximal $(m, \epsilon)$-séparé dans $X_{\infty}$ pour l'application $\sigma$ et la métrique $\delta$.

Notons

\begin{equation*}
\begin{split}
\mathcal{I}=& \cup_{q \geq 0} F_{m+n_0}^{-q}(I(F_{m+n_0})) \cup \pi_{m+n_0}^{-1}(\cup_{q \geq 0} F_{m+n_0-1}^{-q}(I(F_{m+n_0-1}))) \cup \\
&\cdots \cup (\pi_1 \circ \cdots \circ \pi_{m+n_0})^{-1}(\cup_{q \geq 0} F_{0}^{-q}(I(F_{0}))).
\end{split}
\end{equation*}

Chaque $\widehat{x_i}$ s'écrit $\widehat{x_i}=( \cdots , x_{i,p} , \cdots , x_{i,0})$.

Si on met une forme volume sur $X_{m+n_0}$, on a que $\mathcal{I}$ est de mesure nulle (car les $F_p$ et les $\pi_p$ sont dominantes). En particulier, on peut trouver $x_{i, m+n_0}' \in X_{m+n_0} \setminus \mathcal{I}$ suffisamment proche de $x_{i, m+n_0}$ pour que

$$dist_{q-1}(s_q \circ \cdots \circ s_p \circ \pi_{p+1} \circ \cdots \circ \pi_{m+n_0}(x_{i, m+n_0}), s_q \circ \cdots \circ s_p \circ \pi_{p+1} \circ \cdots \circ \pi_{m+n_0}(x_{i, m+n_0}')) < \frac{\epsilon}{8}$$

pour tout $p=0, \cdots, m+n_0$ et $q=1, \cdots , p+1$. En effet toutes les applications $s_q \circ \cdots \circ s_p \circ \pi_{p+1} \circ \cdots \circ \pi_{m+n_0}$ sont holomorphes donc continues.

Comme les $\pi_q$ sont surjectives on peut compléter $x_{i, m+n_0}'$ pour obtenir un point $\widehat{x_i}' \in X_{\infty}$

$$\widehat{x_i}'=( \cdots, x_{i, m+n_0}' , \pi_{m+n_0}(x_{i, m+n_0}'), \cdots , \pi_1 \circ \cdots \circ \pi_{m+n_0}(x_{i, m+n_0}')).$$

Par construction, si on écrit $\widehat{x_i}'=( \cdots , x_{i,p}' , \cdots , x_{i,0}')$, on a 

$$x_{i,n}' \in \Omega_n = X_n \setminus \cup_{m \in \Nn} F_n^{-m}(I(F_n))$$

pour $n=0, \cdots , m+n_0$.

Montrons que les points $x_{1,n_0}' , \cdots ,x_{N,n_0}'$ sont $(m, \frac{\epsilon}{8})$-séparés pour $F_{n_0}$.

Soit $1 \leq i,j \leq N$ avec $i \neq j$. Il existe $1 \leq l \leq m-1$ avec

\begin{equation*}
\begin{split}
\epsilon \leq \delta(\sigma^l(\widehat{x_i}),\sigma^l(\widehat{x_j})) &= \sum_{p=0}^{+ \infty} \frac{ dist_p((\sigma^l(\widehat{x_i}))_p , (\sigma^l(\widehat{x_j}))_p)}{2^p diam(X_p)}\\ 
& \leq \sum_{p=0}^{n_0 - 1} \frac{ dist_p((\sigma^l(\widehat{x_i}))_p , (\sigma^l(\widehat{x_j}))_p)}{2^p diam(X_p)} + \frac{\epsilon}{4}\\
& \leq \sum_{p=0}^{n_0 - 1} \frac{ dist_p((\sigma^l(\widehat{x_i}))_p , (\sigma^l(\widehat{x_i}'))_p)}{2^p diam(X_p)} + \sum_{p=0}^{n_0 - 1} \frac{ dist_p((\sigma^l(\widehat{x_i}'))_p , (\sigma^l(\widehat{x_j}'))_p)}{2^p diam(X_p)}\\
& + \sum_{p=0}^{n_0 - 1} \frac{ dist_p((\sigma^l(\widehat{x_j}'))_p , (\sigma^l(\widehat{x_j}))_p)}{2^p diam(X_p)} + \frac{\epsilon}{4}.\\
\end{split}
\end{equation*}

Mais par le calcul fait juste avant le lemme \ref{lemme4} on a 

$$(\sigma^l(\widehat{x_i}))_p =s_{p+1}\circ \cdots \circ s_{p+l}(x_{i,p+l})=s_{p+1} \circ \cdots \circ s_{p+l} \circ \pi_{p+l+1} \circ \cdots \circ \pi_{m+n_0}(x_{i,m+n_0}) $$

pour $p \geq 0$ (et de même pour $\widehat{x_i}'$, $\widehat{x_j}$ et $\widehat{x_j}'$).

En particulier,

\begin{equation*}
\begin{split}
&\sum_{p=0}^{n_0 - 1} \frac{ dist_p((\sigma^l(\widehat{x_i}))_p , (\sigma^l(\widehat{x_i}'))_p)}{2^p diam(X_p)}\\
&= \sum_{p=0}^{n_0 - 1} \frac{ dist_p(s_{p+1} \circ \cdots \circ s_{p+l}(x_{i,p+l}) , s_{p+1} \circ \cdots \circ s_{p+l}(x_{i,p+l}'))}{2^p diam(X_p)}\\
&\leq 
\sum_{p=0}^{n_0 - 1} \frac{\epsilon /8}{ 2^p diam(X_p)} \leq \frac{\epsilon}{4}
\end{split}
\end{equation*}

(pour la dernière inégalité, on utilise que $diam(X_p) \geq diam(X_0)$ grâce à l'hypothèse sur les distances et on peut supposer que le diamètre de $X_0$ vaut au moins $1$).

Il en est de même en remplaçant $i$ par $j$.

Ainsi

$$\epsilon \leq \delta(\sigma^l(\widehat{x_i}),\sigma^l(\widehat{x_j})) \leq \sum_{p=0}^{n_0 - 1} \frac{ dist_p((\sigma^l(\widehat{x_i}'))_p , (\sigma^l(\widehat{x_j}'))_p)}{2^p diam(X_p)} + \frac{3 \epsilon}{4}.$$

En utilisant le lemme \ref{lemme4}, on a

$$(\sigma^l(\widehat{x_i}'))_p =s_{p+1}\circ \cdots \circ s_{p+l}(x_{i,p+l}')=F_p^l \circ \pi_{p+1} \circ \cdots \circ \pi_{p+l}(x_{i,p+l}')=F_p^l(x_{i,p}') $$

pour $p=0, \cdots ,n_0 -1$ et $l=0, \cdots , m-1$ car $x_{i,n}' \in \Omega_n$ pour $n=0, \cdots, m+n_0$, ce qui implique que

$$\frac{ \epsilon}{4} \leq \sum_{p=0}^{n_0 - 1} \frac{ dist_p(F_p^l(x_{i,p}') , F_p^l(x_{j,p}'))}{2^p diam(X_p)}.$$

Maintenant, pour $p=1 , \cdots , n_0$, on a

\begin{equation*}
\begin{split}
dist_p(F_p^l(x_{i,p}') , F_p^l(x_{j,p}')) &\geq dist_{p-1}(\pi_p(F_p^l(x_{i,p}')) , \pi_p(F_p^l(x_{j,p}')))\\
& = dist_{p-1}(F_{p-1}^l(\pi_p(x_{i,p}')) , F_{p-1}^l(\pi_p(x_{j,p}')))\\
&= dist_{p-1}(F_{p-1}^l(x_{i,p-1}') , F_{p-1}^l(x_{j,p-1}'))
\end{split}
\end{equation*}

toujours parce que les $x_{i,n}'$ sont dans $\Omega_n$ pour $n=0, \cdots, m+n_0$ et $i=1, \cdots,N$.

En itérant cette inégalité, on obtient ainsi

$$\frac{ \epsilon}{4} \leq \sum_{p=0}^{n_0 - 1} \frac{ dist_{n_0}(F_{n_0}^l(x_{i,n_0}') , F_{n_0}^l(x_{j,n_0}'))}{2^p diam(X_p)} \leq 2 dist_{n_0}(F_{n_0}^l(x_{i,n_0}') , F_{n_0}^l(x_{j,n_0}')).$$

On a bien montré que les points $x_{i,n_0}'$ sont $(m, \frac{\epsilon}{8})$-séparés pour $F_{n_0}$.

Comme cette propriété est vraie pour tout $m \geq 1$, on a

\begin{equation*}
\begin{split}
&\limsup_{m \rightarrow + \infty} \frac{1}{m} \log \max ( \# G \mbox{, } G  \mbox{ ensemble } (m, \frac{\epsilon}{8}) \mbox{-séparé dans } X_{n_0} \mbox{ pour } F_{n_0} )\\
 &\geq \limsup_{m \rightarrow + \infty} \frac{1}{m} \log \max ( \# G \mbox{, } G  \mbox{ ensemble } (m, \epsilon) \mbox{-séparé dans } X_{\infty} \mbox{ pour } \sigma ) \geq h_{top}(\sigma) - \gamma.
\end{split}
\end{equation*}

Par ailleurs, comme 

$$h_{top}(F_{n_0}) \geq \limsup_{m \rightarrow + \infty} \frac{1}{m} \log \max ( \# G \mbox{, } G  \mbox{ ensemble } (m, \frac{\epsilon}{8}) \mbox{-séparé dans } X_{n_0} \mbox{ pour } F_{n_0} ),$$

on a $h_{top}(F_{n_0}) \geq h_{top}(\sigma) - \gamma$ et ainsi

$$\sup_{n \geq 0} h_{top}(F_n) \geq  h_{top}(\sigma) - \gamma.$$

Cela termine la démonstration du théorème.

\section{\bf Un exemple de calcul de l'entropie $h_{top}(\sigma)$}

Considérons un exemple dû à V. Guedj (voir l'exemple 1.4 dans \cite{Gu}). Il s'agit de l'application méromorphe $f: \Pp^2(\Cc) \longrightarrow \Pp^2(\Cc)$ définie par 

$$f([z:w:t])=[z^2:wt+t^2:t^2].$$

L'ensemble d'indétermination est $I=[0:1:0]$, les degrés dynamiques $d_1$ et $d_2$ sont égaux à $2$ et $h_{top}(f)=0$ (voir \cite{Gu}).

Maintenant, on fait un éclatement de $ \Pp^2(\Cc)$ en $I$. On note $\widehat{ \Pp^2(\Cc)}$ la surface complexe compacte ainsi obtenue et $e_1: \widehat{ \Pp^2(\Cc)} \longrightarrow \Pp^2(\Cc)$ l'éclatement.

$\widehat{ \Pp^2(\Cc)} $ est obtenue en recollant $\Pp^2(\Cc) \setminus \{I \}$ avec 

$$\Gamma=\{ ((z,t), [\alpha:\beta])  \in U \times \Pp^1(\Cc) \mbox{  ,  } z \beta= t \alpha \}$$

via $((z,t), [\alpha:\beta]) \longrightarrow [z:1:t]$, où $U$ est un petit voisinage de $(0,0) \in \Cc^2$ (voir \cite{GH} p.182). Ici on a pris la carte $(w=1)$ dans $\Pp^2(\Cc)$.

L'application $f \circ e_1$ est holomorphe en dehors de $e_1^{-1}(I)$. Montrons qu'elle est méromorphe sur le diviseur exceptionnel et cherchons son point d'indétermination.

Pour cela on écrit tout d'abord $e_1$ en coordonnées: on se place sur la carte $(w=1)$ de $ \Pp^2(\Cc)$ et $(\alpha=1)$ de $\widehat{ \Pp^2(\Cc)} $ et on a $e_1(z, \beta)=(z,z \beta) $ d'où

$$f \circ e_1(z, \beta)=[z^2:z \beta+ (z \beta)^2:(z \beta)^2]=[z:\beta+ z \beta^2:z \beta^2]$$

qui est encore méromorphe en $(0,0)$.

Dans l'autre carte $(\beta=1)$ de $\widehat{ \Pp^2(\Cc)} $ on a $e_1(t, \alpha)=(t \alpha,t) $ d'où

$$f \circ e_1(t, \alpha)=[(t \alpha)^2:t+ t^2:t^2]=[t \alpha^2:1+ t:t]$$

qui est holomorphe (car $t$ est proche de $0$).

L'application $f \circ e_1 :\widehat{ \Pp^2(\Cc)}  \longrightarrow \Pp^2(\Cc)$ est donc méromorphe et son point d'indétermination $\widehat{I}$ est $(0,0)$ dans la carte $(\alpha=1)$.

L'application $e_1$ est biméromorphe. On peut donc considérer $G= e_1^{-1} \circ f \circ e_1$ qui est bien définie en dehors d'un sous-ensemble analytique de $\widehat{ \Pp^2(\Cc)}$. En prenant l'adhérence du graphe de $G$ dans $\widehat{ \Pp^2(\Cc)} \times \widehat{ \Pp^2(\Cc)}$ on obtient ainsi une nouvelle application méromorphe $G: \widehat{ \Pp^2(\Cc)} \longrightarrow \widehat{ \Pp^2(\Cc)}$. Comme $f$ est dominante, $G$ l'est aussi.

Notons $dist_0$ la métrique de Fubini-Study de $\Pp^2(\Cc)$. Comme à la fin du paragraphe \ref{indépendance}, on peut munir $\widehat{ \Pp^2(\Cc)}$ d'une métrique $dist'$ qui définit toujours sa topologie avec $dist'(x,y) \geq dist_0(e_1(x), e_2(y))$.

En notant $F_0=f$, $X_0=\Pp^2(\Cc)$, l'espace $\widehat{ \Pp^2(\Cc)}$ n'est pas le $X_1$ recherché car $f \circ e_1$ est encore méromorphe en $\widehat{I}$.

C'est pourquoi, nous éclatons maintenant $\widehat{ \Pp^2(\Cc)}$ en ce point. Nous obtenons ainsi une nouvelle surface complexe compacte $X_1$ et on note $e_2: X_1 \longrightarrow \widehat{ \Pp^2(\Cc)}$ l'application éclatement.

Montrons que $f \circ e_1 \circ e_2$ est holomorphe. 

$X_1$ est obtenue en recollant $\widehat{ \Pp^2(\Cc)} \setminus \widehat{I}$ avec

$$\Gamma'=\{ ((z,\beta), [u:v])  \in V \times \Pp^1(\Cc) \mbox{  ,  } z v= \beta u \}$$

en utilisant $((z,\beta), [u:v]) \longrightarrow (z , \beta)$ (où $V$ est un petit voisinage de $(0,0) \in \Cc^2$).

L'application $f \circ e_1 \circ e_2$ est holomorphe en dehors de $e_2^{-1}(\widehat{I})$. Pour les autres points écrivons $f \circ e_1 \circ e_2$ en coordonnées.

Dans la carte $(u=1)$, on a $e_2(z,v)=(z, zv)$ d'où

$$f  \circ e_1 \circ e_2(z,v)= f \circ e_1(z, zv)= [z:zv+z(zv)^{2}:z(zv)^{2}]=[1:v+(zv)^{2}:(zv)^{2}]$$

qui est holomorphe et dans la carte $(v=1)$, on a $e_2(\beta,u)=(\beta u, \beta)$ d'où

$$f  \circ e_1 \circ e_2(\beta,u)= f \circ e_1(\beta u, \beta)=[\beta u: \beta + \beta u \beta^2:\beta u \beta^2]=[u:  1+u \beta^2: u \beta^2] $$

qui est aussi holomorphe (quand $u$ est proche de $0$).

En notant $F_0=f$, $X_0=\Pp^2(\Cc)$, $\pi_1=e_1 \circ e_2$ et $s_1=f \circ e_1 \circ e_2$, on a donc obtenu le diagramme

$$
\xymatrix{
X_1   \ar[d]_{\pi_1} \ar[rd]^{s_1} \\
X_{0} \ar@{.>}[r]^{F_{0}} & X_{0}
}
$$

avec $\pi_1$ holomorphe, surjective avec ses fibres génériques réduites à un point et $s_1$ holomorphe et surjective.

L'application $\pi_1$ est biméromorphe. On peut donc considérer $F_1= \pi_1^{-1} \circ F_0 \circ \pi_1$ qui est bien définie en dehors d'un sous-ensemble analytique de $X_1$. En prenant l'adhérence du graphe de $F_1$ dans $X_1 \times X_1$ on obtient ainsi une nouvelle application méromorphe $F_1: X_1 \longrightarrow X_1$. Comme $F_0$ est dominante, $F_1$ l'est aussi. On a bien obtenu le diagramme 

$$
\xymatrix{
X_1  \ar@{.>}[r]^{F_1} \ar[d]_{\pi_1} \ar[rd]^{s_1} & X_1 \ar[d]^{\pi_1}\\
X_{0} \ar@{.>}[r]^{F_{0}} & X_{0}
}
$$

Comme à la fin du paragraphe \ref{indépendance}, on peut munir $X_1$ d'une métrique $dist_1$ qui vérifie 

$$dist_1(x,y) \geq dist'(e_2(x),e_2(y)) \geq dist_0(e_1 \circ e_2(x),e_1 \circ e_2(y))=dist_0(\pi_1(x),\pi_1(y)).$$

Montrons maintenant 

\begin{proposition} 
On a $h_{top}(F_1) = \log 2$.
\end{proposition}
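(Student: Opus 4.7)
The plan is to establish the upper bound $h_{top}(F_1) \le \log 2$ and the lower bound $h_{top}(F_1) \ge \log 2$ separately.

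For the upper bound I would invoke the Dinh--Sibony inequality $h_{top}(F_1) \le \max_{l=0,1,2} \log d_l(F_1)$, combined with the lemma of paragraph~1 giving the bim\'eromorphic invariance of the dynamical degrees. Since $F_1$ is conjugu\'ee \`a $f$ par $\pi_1$, on a $d_l(F_1) = d_l(f)$; Guedj a calcul\'e $d_0 = 1$ et $d_1 = d_2 = 2$, d'o\`u $h_{top}(F_1) \le \log 2$.

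For the lower bound the idea is to realize the topological degree $d_2(F_1) = 2$ as entropy by constructing $(m, \epsilon)$-separated preimages. In the affine chart $(t=1)$ of $\Pp^2(\Cc)$, $f$ acts as $(z, w) \mapsto (z^2, w+1)$, so a generic point $(z_0, w_0)$ has exactly $2^m$ preimages under $f^m$, indexed by $2^m$-th roots of unity: $x_k = (\omega_k z_0^{1/2^m}, w_0 - m)$ with $\omega_k = e^{2\pi i k/2^m}$. These collapse to $I$ in $\Pp^2(\Cc)$ (the mechanism behind $h_{top}(f) = 0$), but a short computation in the chart $(\alpha=1)$ of $\widehat{\Pp^2(\Cc)}$ gives the $\beta$-coordinate of the lift as $\beta = 1/z$, so $\widehat{x}_k$ has $\beta_k = 1/(\omega_k z_0^{1/2^m})$, a set of $2^m$ points distributed on a near-unit circle uniformly bounded away from the second point d'ind\'etermination $\widehat{I} = (0,0)$ for generic $z_0$.

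Iterating $F_1$ corresponds to iterating $f$ downstairs before lifting, so the $\beta$-coordinate of $F_1^l(\widehat{x}_k)$ equals $1/(\omega_k^{2^l} z_0^{2^l/2^m})$; two lifts $\widehat{x}_{k_i}$, $\widehat{x}_{k_j}$ with $k_i \not\equiv k_j \pmod{2^{m-l}}$ are angularly separated at step $l$ by an amount of order $2\pi/2^{m-l}$. Writing $a = v_2(k_i - k_j)$ for the $2$-adic valuation of the difference, the last step at which they are distinguishable is $l = m - a - 1$, with angular spacing at least $2\pi/2^{a+1}$. Given $\epsilon > 0$, I would fix $A$ large enough that $2\pi/2^{A+1}$ dominates $\epsilon$ up to the metric distortion, then select one representative $k$ per residue class modulo $2^{A+1}$; this yields $2^{m - A - 1}$ preimages pairwise $(m, \epsilon')$-separated for some $\epsilon' > 0$ depending only on $\epsilon$, and letting $m \to \infty$ gives $h_{top}(F_1) \ge \log 2$.

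The main obstacle I anticipate is the metric bookkeeping: one must verify that the angular separation computed in the coordinate chart $(\alpha = 1)$ of $\widehat{\Pp^2(\Cc)}$ transfers, via the second blow-up $e_2$ and the m\'etrique $dist_1$ on $X_1$, into a uniform $(m, \epsilon')$-separation in $dist_1$. Because for generic $z_0$ the selected preimages and their iterates up to time $m$ stay in a fixed compact subset of $X_1$ avoiding both the second diviseur exceptionnel and $I(F_1)$, this reduces to bi-Lipschitz comparisons on a fixed compact set, together with the standard genericity argument (using that $\cup_{q \ge 0} F_1^{-q}(I(F_1))$ is of empty interior) that the chosen orbits lie in $\Omega_1$.
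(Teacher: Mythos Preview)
Your upper bound is exactly the paper's argument. For the lower bound, however, the paper takes a much shorter route than you propose. Rather than working with preimages in the affine chart $(t=1)$ and tracking their separation through the blow-ups, the paper passes to the intermediate map $G = e_1^{-1} \circ f \circ e_1$ on $\widehat{\Pp^2(\Cc)}$, uses the monotonicity $h_{top}(F_1) \ge h_{top}(G)$ (same argument as in paragraph~\ref{croissance}), and then computes $G$ explicitly in the chart $(\beta = 1)$: one finds $G(t, \alpha) = \bigl(t/(1+t),\, \alpha^2\bigr)$, so the circle $\{t = 0,\ |\alpha| = 1\}$ in the exceptional divisor is $G$-invariant and the restricted dynamics is $\alpha \mapsto \alpha^2$, whence $h_{top}(G) \ge \log 2$ at once.

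Your preimage-counting approach can be made to work and in fact detects the same invariant circle from the outside: your lifted preimages have $\beta$-coordinate on the unit circle and $z'$-coordinate tending to $0$ as $m \to \infty$, so they accumulate precisely on the circle the paper exploits. But as written the sketch has a muddled step. The residue-class selection you describe does not do what you want: picking one representative per class modulo $2^{A+1}$ yields $2^{A+1}$ points, not $2^{m-A-1}$; picking all $k$ in a fixed class does give $2^{m-A-1}$ points but forces $a \ge A+1$, the wrong direction for your stated bound. In fact no selection is needed at all: at step $l = m - a - 1$ the angular separation is exactly $\pi$ for every pair (since $(k_i-k_j)/2^{a}$ is odd), not merely $2\pi/2^{a+1}$, so all $2^m$ lifted preimages are already $(m, \epsilon')$-separated for a uniform $\epsilon'$ once the metric comparison on the fixed compact set is carried out. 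With that correction your argument goes through, but it amounts to rediscovering the doubling map on the exceptional circle with extra bookkeeping; the paper's direct identification of the invariant subsystem avoids that bookkeeping entirely.
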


\begin{proof}

Tout d'abord, par T.-C. Dinh et N. Sibony (voir \cite{DS1} et \cite{DS2}), on a d'une part que les degrés dynamiques sont des invariants biméromorphes (en particulier les degrés dynamiques de $F_1$ sont égaux à ceux de $F_0$) et d'autre part que

$$ h_{top}(F_1) \leq \max_{i=0,1,2} \log d_i(F_1)= \log 2.$$

Maintenant, la même démonstration qu'au paragraphe \ref{croissance} implique que $h_{top}(F_1) \geq  h_{top}(G)$. Pour montrer la proposition il suffit donc de voir que $ h_{top}(G)\geq \log 2$.

Dans la carte $(\beta=1)$ de $\widehat{ \Pp^2(\Cc)} $ on a vu que 

$$F_0 \circ e_1(t, \alpha)=[(t \alpha)^2:t+ t^2:t^2]=[t \alpha^2:1+ t:t].$$

On a donc, pour $t$ proche de $0$,

\begin{equation*}
\begin{split}
G(t, \alpha)&= \left(  \left( \frac{t \alpha^2}{1+t}, \frac{t }{1+t} \right), \left[ \frac{t \alpha^2}{1+t} : \frac{t }{1+t} \right] \right)\\
&= \left(  \left( \frac{t \alpha^2}{1+t}, \frac{t}{1+t} \right), [  \alpha^2 : 1 ] \right).
\end{split}
\end{equation*}

Si on considère la carte $(\beta=1)$ au but, cela s'écrit $G(t, \alpha)=\left( \frac{t }{1+t} , \alpha^2 \right)$.

Le diviseur exceptionnel $E$ a pour équation $(t=0)$ dans cette carte de $\widehat{ \Pp^2(\Cc)} $. Si on considère le cercle $| \alpha |=1$ dans $E$, il est invariant par $G$ et la dynamique dessus est $\alpha \longrightarrow \alpha^2$. Cela implique que $h_{top}(G) \geq \log 2$.

\end{proof}

Considérons une suite de diagramme comme dans l'introduction que l'on construit à partir de $X_1$ et $F_1$. On a vu que $h_{top}(\sigma)= \sup_n h_{top}(F_n)$ ce qui implique que $h_{top}(\sigma) \geq h_{top}(F_1)\geq \log 2$. Mais comme les degrés dynamiques des $F_n$ sont égaux à ceux de $F_0$, par \cite{DS1} et \cite{DS2}, on a $\sup_n h_{top}(F_n) \leq \log 2$ c'est-à-dire,

\begin{corollaire}
On a $h_{top}(\sigma)= \log 2$.
\end{corollaire}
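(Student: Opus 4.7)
Le plan est d'encadrer $h_{top}(\sigma)$ par $\log 2$ de part et d'autre, en combinant le théorème \ref{théorème1} avec l'invariance biméromorphe des degrés dynamiques. À partir de la paire $(X_1, F_1)$ obtenue ci-dessus, j'itérerais le procédé d'éclatements du paragraphe \ref{éclatements} pour construire une suite $(X_n, F_n)_{n \geq 0}$ satisfaisant le formalisme de l'introduction, puis la limite projective $X_\infty$ et l'application $\sigma : X_\infty \longrightarrow X_\infty$ associées.

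Pour la minoration, j'appliquerais le théorème \ref{théorème1} à cette suite pour obtenir $h_{top}(\sigma) = \sup_{n \geq 0} h_{top}(F_n)$, puis la proposition qui précède pour conclure à $h_{top}(\sigma) \geq h_{top}(F_1) \geq \log 2$.

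Pour la majoration, j'observerais que chaque $\pi_n$ est une composition d'éclatements, donc biméromorphe. Par l'invariance biméromorphe des degrés dynamiques (voir \cite{DS1}), une récurrence immédiate donnerait $d_l(F_n) = d_l(f)$ pour tout $n \geq 0$ et tout $l \in \{0, 1, 2\}$. Comme $d_1(f) = d_2(f) = 2$ et $d_0(f) = 1$, on aurait $\max_l \log d_l(F_n) = \log 2$, et la majoration de Gromov--Dinh--Sibony $h_{top}(F_n) \leq \max_l \log d_l(F_n)$ (voir \cite{DS1}, \cite{DS2}) fournirait $h_{top}(F_n) \leq \log 2$ pour tout $n$, d'où $h_{top}(\sigma) \leq \log 2$.

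La véritable difficulté ne se trouve pas dans ce corollaire, qui n'est qu'un assemblage du théorème \ref{théorème1}, de la proposition précédente et de résultats connus sur les degrés dynamiques. Le point non trivial a été effectué en amont~: exhiber dans une carte de $X_1$ un cercle invariant contenu dans le diviseur exceptionnel sur lequel $F_1$ se lit comme $\alpha \mapsto \alpha^2$, ce qui force la minoration $h_{top}(F_1) \geq \log 2$ et rend le présent corollaire possible.
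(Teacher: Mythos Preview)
Votre démonstration est correcte et suit essentiellement le même chemin que celle du papier~: minoration via le théorème \ref{théorème1} et la proposition qui précède, majoration via l'invariance biméromorphe des degrés dynamiques combinée à la borne de Dinh--Sibony. Le papier invoque d'ailleurs directement le lemme de la fin du paragraphe \ref{éclatements} pour l'égalité $d_l(F_n)=d_l(f)$, que vous redémontrez en substance.
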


\section{\bf{Démonstration du principe variationnel}}
 
Dans ce paragraphe, on suppose que les $\widetilde{f^{-m}(I)}$ sont disjoints (pour $m \in \Nn$). Fixons $n \in \Nn$ et considérons l'application $F_n: X_n \longrightarrow X_n$ comme dans l'introduction. Son ensemble d'indétermination sera encore noté $I(F_n)$. Dans un premier temps nous allons montrer le

\begin{lemme}

Pour $m \in \Nn$, les ensembles $\widetilde{F_n^{-m}(I(F_n))}$ sont disjoints.

\end{lemme}

\begin{proof}

Nous démontrons ce résultat par récurrence sur $n$.

Pour $n=0$, c'est l'hypothèse car $F_0=f$. On suppose maintenant la propriété vraie au rang $n-1$ avec $n \geq 1$. Si les $\widetilde{F_n^{-m}(I(F_n))}$ ne sont pas disjoints, soit

$$x \in \widetilde{F_n^{-m}(I(F_n))} \cap \widetilde{F_n^{-q}(I(F_n))}$$

avec $q > m$. Les entiers $m$ et $q$ sont choisis minimaux, c'est-à-dire que l'on prend le plus petit $m \geq 0$ tel que $\widetilde{F_n^{-m}(I(F_n))} $ rencontre un autre $\widetilde{F_n^{-q}(I(F_n))}$, puis le plus petit $q$ qui vérifie cette propriété (on a donc $q > m$).

Soit $\Gamma_{F_n}$ le graphe de $F_n$ dans $X_n \times X_n$. Par récurrence, on voit que pour $m \geq 1$, $x \in \widetilde{F_n^{-m}(A)}$ est équivalent à l'existence de points $x_0, \cdots,x_m$ avec $(x_i,x_{i+1}) \in \Gamma_{F_n}$ pour $i=0, \cdots, m-1$ (nous appellerons {\bf chaîne} de $F_n$ une telle suite), $x_0=x$ et $x_m \in A$.

Ici, on a $x \in \widetilde{F_n^{-m}(I(F_n))} \cap \widetilde{F_n^{-q}(I(F_n))}$. Il existe donc deux chaînes $x_0, \cdots, x_m$ et $x_0', \cdots,x_q'$ avec $x_0=x_0'=x$, $x_m \in I(F_n)$ et $x_q' \in I(F_n)$.

Comme le $m$ est minimal, les points $x_0, \cdots , x_{m-1}$ ne sont pas dans $I(F_n)$. Par ailleurs, si $y \in X_n \setminus I(F_n)$, il existe un unique $z \in X_n$ avec $(y,z) \in \Gamma_{F_n}$. De là, on en déduit que

$$x_0=x_0' , \cdots, x_m=x_m'.$$

En particulier, dans la chaîne $x_0', \cdots,x_q'$, on a $x_m'$ et $x_q'$ qui sont dans $I(F_n)$.

Remarquons que pour $y,z \in X_n$ avec $(y,z) \in \Gamma_{F_n}$, on a $(\pi_n(y),\pi_n(z)) \in \Gamma_{F_{n-1}}$. En effet, le graphe $\Gamma_{F_n}$ est l'adhérence des points de la forme $\{ (y, F_n(y)) \mbox{  ,  } y \notin I(F_n) \}$. On peut donc trouver une suite $(y_p)$ qui converge vers $y$ avec $y_p \notin I(F_n)$ et $(F_n(y_p))$ qui tend vers $z$ quand $p \rightarrow + \infty$. Comme $\pi_n^{-1}(I(F_{n-1}))$ est un sous-ensemble analytique de $X_n$ (car $\pi_n$ est dominante), quitte à bouger un peu les $y_p$, on peut supposer que $\pi_n(y_p) \notin I(F_{n-1})$. Ainsi, par continuité de $\pi_n$, la suite $(\pi_n(y_p))$ converge vers $\pi_n(y)$ et $F_{n-1} \circ \pi_n(y_p) = \pi_n \circ F_n(y_p)$ vers $\pi_n(z)$ quand $p \rightarrow + \infty$. Cela signifie bien que $(\pi_n(y),\pi_n(z)) \in \Gamma_{F_{n-1}}$.

En utilisant cette remarque, on obtient que $\pi_n(x_0'), \cdots,\pi_n(x_q')$ est une chaîne pour $F_{n-1}$.

Maintenant, montrons le

{\bf Fait:} On a $I(F_n) \subset \pi_n^{-1}(F_{n-1}^{-1}(I(F_{n-1})))$. 

Soit $y \in X_n \setminus \pi_n^{-1}(F_{n-1}^{-1}(I(F_{n-1})))$.  Il y a deux possibilités:

- si $\pi_n(y) \in I(F_{n-1}) $, alors $s_n(y) \notin I(F_{n-1})$ sinon comme $\pi_n(y), s_n(y)$ est une chaîne pour $F_{n-1}$, on aurait $I(F_{n-1}) \cap F_{n-1}^{-1}(I(F_{n-1})) \neq \emptyset$ ce qui contredirait l'hypothèse de récurrence. Ainsi $s_n(y) \notin I(F_{n-1})$ et alors $\pi_n^{-1} \circ s_n$ est holomorphe en $y$ car
$$ \{ x \in X_{n-1} \mbox{  ,  } \mbox{dim}({\pi_n^{-1}(x)}) \geq 1 \} \subset I(F_{n-1}) .$$
Cela implique que $y \notin I(F_n)$.

- si $\pi_n(y) \notin I(F_{n-1}) $, alors $F_{n-1} \circ \pi_n(y)$ est bien défini et ne se trouve pas dans $I(F_{n-1})$. En particulier, $\pi_n^{-1} \circ F_{n-1} \circ \pi_n$ est holomorphe en $y$ et donc $y \notin I(F_n)$.

En utilisant ce fait, on a que $\pi_n(x_0'), \cdots,\pi_n(x_q')$ forme une chaîne pour $F_{n-1}$ avec $\pi_n(x_m') \in F_{n-1}^{-1}(I(F_{n-1}))$ et $\pi_n(x_q') \in F_{n-1}^{-1}(I(F_{n-1}))$. Autrement dit,

$$F_{n-1}^{-1}(I(F_{n-1})) \cap \widetilde{F_{n-1}^{-q+m-1}(I(F_{n-1}))} \neq \emptyset.$$

Cela contredit l'hypothèse de récurrence et termine ainsi la preuve du lemme.

\end{proof}

Passons maintenant à la démonstration du principe variationnel (le théorème \ref{théorème2}).

Soit $\gamma > 0$. Comme $X_{\infty}$ est un espace métrique compact et que $\sigma$ est continue, on peut appliquer le principe variationnel à ce système dynamique, c'est-à-dire qu'il existe une probabilité ergodique $\widehat{\nu}$, invariante par $\sigma$, telle que 

$$h_{\widehat{\nu}}(\sigma) \geq h_{top}(\sigma) - \gamma.$$

Soit 

$$\widehat{\Omega}=\{ \widehat{x}=(x_n) \in X_{\infty} \mbox{  ,  }  x_n \notin \cup_{m \geq 0} \widetilde{F_n^{-m}(I(F_n))} \mbox{ pour tout } n \in \Nn\}.$$

Montrons tout d'abord que $\widehat{\nu}(\widehat{\Omega})=1$.

Si $\widehat{\nu}(\widehat{\Omega}^c) > 0$, il existe $n,m \in \Nn$ avec 

$$\widehat{\nu}( \{\widehat{x}=(x_n) \in X_{\infty} \mbox{  ,  }  x_n \in \widetilde{F_n^{-m}(I(F_n))} \}) > 0.$$

On note $\widehat{B}$ cet ensemble $\{\widehat{x}=(x_n) \in X_{\infty} \mbox{  ,  }  x_n \in \widetilde{F_n^{-m}(I(F_n))} \}$. Par le théorème de récurrence de Poincaré, il existe $l \geq 1$ avec $\sigma^{-l}(\widehat{B}) \cap \widehat{B} \neq \emptyset$. Soit $\widehat{x} \in \sigma^{-l}(\widehat{B}) \cap \widehat{B}$. Si on écrit $\widehat{x}=(x_n)$, on a déjà vu que

$$\sigma^l(\widehat{x})=(\cdots , s_{p+1}\circ \cdots \circ s_{p+l}(x_{p+l}), \cdots , s_1 \circ \cdots \circ s_l(x_{l})).$$

On a donc $x_n \in \widetilde{F_n^{-m}(I(F_n))}$ et $(\sigma^l(\widehat{x}))_n =s_{n+1}\circ \cdots \circ s_{n+l}(x_{n+l}) \in \widetilde{F_n^{-m}(I(F_n))}$.

Mais

\begin{lemme}

$$\pi_{n+1} \circ \cdots \circ \pi_{n+l}(x_{n+l}), \pi_{n+1} \circ \cdots \pi_{n+l-1} \circ s_{n+l}(x_{n+l}), \cdots , s_{n+1}\circ \cdots \circ s_{n+l}(x_{n+l})$$

est une chaîne pour $F_n$.

\end{lemme}

\begin{proof}

Tout d'abord, pour tout $y \in X_{p+1}$ et tout $p \geq 0$, les points $ \pi_{p+1}(y), s_{p+1}(y)$ forment une chaîne pour $F_p$.

En particulier, $\pi_{p+1} \circ s_{p+2} \circ \cdots \circ s_{n+l}(x_{n+l}), s_{p+1} \circ s_{p+2} \circ \cdots \circ s_{n+l}(x_{n+l})$ est une chaîne pour $F_p$ (pour tout $p=n, \cdots , n+l-1$).

Ensuite, en utilisant la remarque faite dans la preuve du lemme précédent, on a

$$\pi_{n+1} \circ \cdots \circ \pi_{p+1} \circ s_{p+2} \circ \cdots \circ s_{n+l}(x_{n+l}), \pi_{n+1} \circ \cdots \pi_p  \circ s_{p+1} \circ s_{p+2} \circ \cdots \circ s_{n+l}(x_{n+l})$$

qui forme une chaîne pour $F_n$ pour tout $p=n, \cdots , n+l-1$. Cela démontre le lemme.

\end{proof}

Comme $\pi_{n+1} \circ \cdots \circ \pi_{n+l}(x_{n+l})=x_n$, on a donc obtenu une chaîne pour $F_n$ qui part de $x_n \in \widetilde{F_n^{-m}(I(F_n))}$ et qui va jusqu'à $s_{n+1}\circ \cdots \circ s_{n+l}(x_{n+l}) \in \widetilde{F_n^{-m}(I(F_n))}$. Cela implique que

$$\widetilde{F_n^{-m}(I(F_n))} \cap \widetilde{F_n^{-m-l}(I(F_n))} \neq \emptyset$$

et on obtient ainsi une contradiction.

Nous avons donc montré que $\widehat{\nu}(\widehat{\Omega})=1$. 

Considérons maintenant $p: X_{\infty} \longrightarrow X_0=X$ qui à $\widehat{x}=(x_n)$ associe $p(\widehat{x})=x_0$.

Tout d'abord, par définition de $\widehat{\Omega}$, on a $p( \widehat{\Omega}) \subset \Omega= X \setminus \cup_{n \geq 0} f^{-n}(I)$. Ensuite, $p$ est injective sur $\widehat{\Omega}$. On a même une propriété un peu plus forte: en effet, soit $p(\widehat{x})=x_0=p(\widehat{x'})$ avec $\widehat{x}=(x_n) \in \widehat{\Omega}$ et $\widehat{x'}=(x_n')$ un point quelconque de $X_{\infty}$. Si $\widehat{x} \neq \widehat{x'}$, on considère $l \geq 1$ le plus entier tel que $x_l \neq x_l'$. Mais $x_{l-1}=\pi_l(x_l)=\pi_l(x_l')$ n'est pas dans $I(F_{l-1})$ car $\widehat{x} \in \widehat{\Omega}$ et par hypothèse

$$ \{ x \in X_{l-1} \mbox{  ,  } \mbox{dim}({\pi_l^{-1}(x)}) \geq 1 \} \subset I(F_{l-1}) .$$

Autrement dit, $\pi_l(x_l)$ et $\pi_l(x_l')$ vivent là où $\pi_l^{-1}$ est holomorphe: on a donc $x_l=x_l'$, ce qui est une contradiction.

Montrons que $p$ est un homéomorphisme de $\widehat{\Omega}$ sur son image. Tout d'abord, par définition de $\delta$, 

$$\delta(\widehat{x},\widehat{x'}) \geq \frac{dist_0(x_0,x_0')}{diam(X_0)}=\frac{dist_0(p(\widehat{x}),p(\widehat{x'}))}{diam(X_0)}$$

donc $p$ est continue (sur tout $X_{\infty}$). Ensuite, soit $x_0 \in p(\widehat{\Omega})$ et $(y_n)$ une suite de $p(\widehat{\Omega})$ qui converge vers $x_0$. Si la suite $(p^{-1}(y_n))$ ne converge pas vers $p^{-1}(x_0)$, il existe $\epsilon > 0$ et une sous-suite $(p^{-1}(y_{\psi(n)}))$ qui reste à distance au moins $\epsilon$ de $p^{-1}(x_0)$ pour la métrique $\delta$. Comme 
$X_{\infty}$ est compact, on peut trouver une sous-suite $(p^{-1}(y_{\varphi(n)}))$ de $(p^{-1}(y_{\psi(n)}))$ qui converge vers $\widehat{z} \in X_{\infty}$. On a déjà que $\delta(\widehat{z},p^{-1}(x_0)) \geq \epsilon$. Ensuite, la continuité de $p$ sur tout $X_{\infty}$ implique que $(p(p^{-1}(y_{\varphi(n)})))$ converge vers $p(\widehat{z})$, autrement dit, $p(\widehat{z})=x_0$. Comme $x_0 \in p(\widehat{\Omega})$ la propriété plus forte que l'injectivité que l'on a montrée donne que $\widehat{z}=p^{-1}(x_0)$. Cela contredit $\delta(\widehat{z},p^{-1}(x_0)) \geq \epsilon$. L'application $p$ est donc un homéomorphisme de $\widehat{\Omega}$ sur $p(\widehat{\Omega})$. Enfin, on a le diagramme commutatif

$$
\xymatrix{
\widehat{\Omega}  \ar[r]^{\sigma} \ar[d]_{p}  & \widehat{\Omega} \ar[d]^{p}\\
p(\widehat{\Omega})  \ar[r]^{f} & p(\widehat{\Omega})
}
$$

En effet, soit $\widehat{x}=(x_n) \in \widehat{\Omega}$, on a $\sigma(\widehat{x})=(s_n(x_n))_{n \geq 1}$, d'où $p(\sigma(\widehat{x}))=s_1(x_1)$. Le point $x_0=\pi_1(x_1)$ n'est pas dans $I(F_0)=I$ car $\widehat{x} \in \widehat{\Omega}$. On a donc 

$$p(\sigma(\widehat{x}))=s_1(x_1)= f \circ \pi_1(x_1)=f(x_0)=f \circ p(\sigma(\widehat{x})).$$

De là, on obtient que la mesure $\nu= p_*(\widehat{\nu})$ est invariante par $f$, vit dans $\Omega$ et son entropie est égale à celle de $\widehat{\nu}$.

En particulier,

$$\sup \{h_{\mu}(f) \mbox{  ,  } \mu \mbox{ ergodique et } \mu(I)=0 \} \geq h_{\nu}(f)=h_{\widehat{\nu}}(\sigma) \geq h_{top}(\sigma) - \gamma.$$

Quand on a une probabilité invariante $\mu$ avec $\mu(I)=0$ alors $\mu(\Omega)=1$ par invariance. Comme on a toujours $\sup \{h_{\mu}(f) \mbox{  ,  } \mu \mbox{ ergodique et } \mu(\Omega)=1 \} \leq h_{top}(f) \leq h_{top}(\sigma)$ (voir \cite{Gu} pour la première inégalité et le théorème \ref{théorème1} pour la seconde), le principe variationnel est démontré.

\newpage

\bigskip

\bigskip\noindent
Henry De Thélin, Université Paris 13, Sorbonne Paris Cité, LAGA, CNRS (UMR 7539), F-93430, Villetaneuse, France.\\
{\tt dethelin@math.univ-paris13.fr}

\end{document}